\newtheorem{thm}{Theorem}
\newtheorem{cor}[thm]{Corollary}
\newtheorem{defi}[thm]{Definition}
\newtheorem{lem}[thm]{Lemma}
\newtheorem{claim}[thm]{Claim}
\newtheorem{obs}[thm]{Observation}
\newtheorem{conj}{Conjecture}
\newcommand{\ta}{t}
\newcommand{\te}{t_e}
\title{On the number of edge-disjoint triangles in $K_4$-free graphs}
\author{Ervin Gy\H{o}ri\thanks{Research partially supported by Hungarian National Science Fund (OTKA) grant 101536.} \\
\small R\'enyi Institute\\[-0.8ex]
\small Hungarian Academy of Sciences\\[-0.8ex]
\small Budapest, Hungary\\
\small\tt gyori.ervin@renyi.mta.hu\\
\and
Bal\'azs Keszegh \thanks{Research supported by Hungarian National Science Fund (OTKA) grant PD 108406 and by the J\'anos Bolyai Research Scholarship of the Hungarian Academy of Sciences.
}\\
\small R\'enyi Institute\\[-0.8ex]
\small Hungarian Academy of Sciences\\[-0.8ex]
\small Budapest, Hungary\\
\small\tt keszegh.balazs@renyi.mta.hu\\
}
\date{}
\begin{document}

\maketitle

\begin{abstract}
We show the quarter of a century old conjecture that every $K_4$-free graph with $n$ vertices and $\lfloor n^2/4 \rfloor +k$ edges contains $k$ pairwise edge disjoint triangles.
\end{abstract}

\section{Introduction}
Extending the well-known result of extremal graph theory by Tur\'an, E. Gy\H ori and A.V. Kostochka \cite{ek} and independently F.R.K Chung \cite{chung} proved the following theorem. For an arbitrary graph $G$, let $p(G)$ denote the minimum of $\sum|V(G_i)|$ over all decompositions of $G$ into edge disjoint cliques $G_1,G_2,...$. Then $p(G)\le2t_2(n)$ and equality holds if and only if $G\cong T_2(n)$. Here $T_2(n)$ is the $2$-partite Tur\'an graph on $n$ vertices and $t_2(n)=\lfloor n^2/4\rfloor$ is the number of edges of this graph. P. Erd\H os later suggested to study the weight function $p^*(G)=\min \sum(|V(G_i)|-1))$. The first author   \cite{ervinsurvey} started to study this function and to prove the conjecture $p^*(G)\le t_2(n)$ just in the special case when $G$ is $K_4$-free.  This 24 year old conjecture was worded equivalently as follows.
\begin{conj} \label{mainconj}
Every $K_4$-free graph on $n$ vertices and $t_2(n)+m$ edges contains at least $m$ edge disjoint triangles.
\end{conj}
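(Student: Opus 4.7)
I would prove Conjecture~\ref{mainconj} by induction on $n$; the base cases $m=0$ and small $n$ are trivial since the conclusion asks for zero or few triangles. For the inductive step the first move is to search for a vertex of small degree. Concretely, if there exists $v$ with $d(v) \leq \lfloor n/2\rfloor$, consider $G' = G - v$. Since $t_2(n) - t_2(n-1) = \lfloor n/2\rfloor$, we have $|E(G')| = t_2(n) + m - d(v) \geq t_2(n-1) + m$, and $G'$ remains $K_4$-free, so the inductive hypothesis supplies $m$ edge-disjoint triangles inside $G'$, which are also triangles of $G$. Hence I may assume $\delta(G) \geq \lfloor n/2\rfloor + 1$. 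A useful consequence of this lower bound together with $|E(G)| = t_2(n)+m$ is that $m$ is already at least $\Theta(n)$, so the graph is substantially denser than $T_2(n)$.

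The remaining regime is where the $K_4$-freeness must do real work. The crucial structural fact is that $N(v)$ is triangle-free for every vertex $v$ (otherwise $v$ plus a triangle in $N(v)$ yield a $K_4$), so by Mantel's theorem the number of triangles through $v$ is at most $\lfloor d(v)^2/4\rfloor$, and moreover the common neighborhood of any edge is an independent set. A closely related consequence, obtained by double-counting, is that for every triangle $xyz$ the degrees satisfy $d(x)+d(y)+d(z) \leq 2n$, because each outside vertex has at most two neighbors in $\{x,y,z\}$. Combined with $\delta(G) \geq \lfloor n/2\rfloor + 1$ this strongly constrains where triangles can sit. I would try to exploit these constraints either by (a) locating a single vertex through which many edge-disjoint triangles pass --- since $N(v)$ is triangle-free, a large matching inside $N(v)$ directly produces edge-disjoint triangles at $v$ --- or (b) finding a local configuration of several triangles whose simultaneous removal is ``edge-efficient'', so that deleting $3k$ edges contributes $k$ triangles to the packing \emph{and} leaves a subgraph still satisfying the inductive hypothesis.

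The main obstacle, and what I expect to be the technical heart of the argument, is exactly this edge-efficiency requirement. Removing one triangle costs three edges but contributes only one triangle, so a naive induction loses a factor of three (the trivial bound gives only $m/3$ edge-disjoint triangles in any triangle-free residue). To close this gap one must either find a weighted/fractional argument showing each removed edge pays for a third of a triangle in the aggregate, or design a local swap: given a triangle $T$ whose removal would break the inductive hypothesis (because some vertex of $T$ drops below the degree threshold, or the excess falls short), exhibit a neighboring triangle that can be substituted into the packing without such loss. The right substructure is likely a fan or ``book'' of triangles sharing a common edge or vertex, where the $K_4$-free condition forces the apex set to be independent and bounds its size sharply, letting the removal match edge cost to triangle count. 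Identifying such an always-available configuration and analyzing the borderline cases near blow-ups of $T_2(n)$ is, in my view, the crux of the whole proof.
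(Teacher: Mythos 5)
Your proposal is not a proof: the inductive reduction to $\delta(G)\ge\lfloor n/2\rfloor+1$ is fine as far as it goes (indeed $t_2(n)-t_2(n-1)=\lfloor n/2\rfloor$, so deleting a vertex of degree at most $\lfloor n/2\rfloor$ preserves the excess), but everything after that is a list of hopes rather than an argument. You name the crux yourself --- the ``edge-efficiency'' problem, that removing a triangle costs three edges and buys one triangle, so any local removal scheme loses a factor of three --- and you do not resolve it; neither option (a) nor option (b) is carried out, and no always-available configuration is exhibited. Note also that the minimum-degree threshold you reach, $\delta>n/2$, is well below the chromatic threshold for $K_4$-free graphs (one needs roughly $\delta>5n/8$ to force $3$-colorability), so you cannot fall back on the known $3$-partite case after the reduction; the degree-sum constraint $d(x)+d(y)+d(z)\le 2n$ for a triangle $xyz$ is likewise far from contradicting $\delta>n/2$ and does not meaningfully restrict where triangles sit. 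So the hard regime of the problem is left entirely open.

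The paper sidesteps the edge-efficiency obstacle altogether, and this is the idea your plan is missing: instead of building the packing triangle by triangle, one lower-bounds the \emph{total} number of triangles $t$ and converts this into edge-disjoint triangles by a pigeonhole lemma. Specifically, for a greedy partition $P$ of a $K_4$-free graph (a partition into cliques such that the union of the small cliques stays clique-bounded), assigning each vertex the index of its clique and each triangle the sum of its indices modulo $r(P)$ shows that $t_e\ge t/r(P)$, since two triangles sharing an edge cannot get the same label. The real work is then the purely counting statement $t\ge r(P)\,(e-n^2/4)$, valid for \emph{all} graphs, which is proved by Zykov-type symmetrization (replacing non-adjacent vertex classes by copies of the one with larger weight) while carefully maintaining a generalized greedy partition so that $r$ does not decrease; the extremal case is a complete multipartite graph, where the bound is checked directly. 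Combining the two gives $t_e\ge e-n^2/4=k$ in one stroke, with no induction on $n$ and no local removal of triangles. If you want to salvage your approach, you would need to supply exactly the missing ingredient: a mechanism that converts a count of \emph{all} triangles (which your Mantel-type bounds do control) into \emph{edge-disjoint} ones without paying the factor of three --- which is precisely what the greedy-partition pigeonhole lemma provides.
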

This was only known if the graph is $3$-colorable i.e. $3$-partite.

In \cite{chinese} towards proving the conjecture, they proved that for every $K_4$-free graph there are always at least $32k/35\ge 0.9142k$ edge-disjoint triangles and if $k\ge 0.0766 n^2$ then there are at least $k$ edge-disjoint triangles.
Their main tool is a nice and simple to prove lemma connecting the number of edge-disjoint triangles with the number of all triangles in a graph. In this paper using this lemma and proving new bounds about the number of all triangles in $G$, we settle the above conjecture:

\begin{thm}\label{thm:main}
Every $K_4$-free graph on $n^2/4+k$ edges contains at least $\lceil k\rceil$ edge-disjoint triangles.
\end{thm}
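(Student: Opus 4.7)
The plan is to carry out the two-step scheme already advertised in the introduction. The first step is to invoke the ``simple'' lemma of \cite{chinese} which, in a $K_4$-free graph, lower-bounds the number of edge-disjoint triangles in terms of the total number of triangles. Granted this lemma, the theorem reduces to the following supersaturation statement: a $K_4$-free graph with $\lfloor n^2/4\rfloor+k$ edges contains at least enough triangles (counted with multiplicity) for the lemma to output $\lceil k\rceil$ edge-disjoint ones.

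The heart of the proof is therefore to establish a sharp lower bound on the total number of triangles $t(G)$ in a $K_4$-free graph with $\lfloor n^2/4\rfloor+k$ edges. I would attack this by induction on $n$. Choose a vertex $v$ of minimum degree $d$. Since $G$ is $K_4$-free, the subgraph induced on $N(v)$ is triangle-free, so by Mantel's theorem it contains at most $\lfloor d^2/4\rfloor$ edges; each such edge spans a triangle through $v$. The deleted graph $G-v$ is a $K_4$-free graph on $n-1$ vertices with $\lfloor n^2/4\rfloor+k-d$ edges, whose ``excess'' over $\lfloor(n-1)^2/4\rfloor$ shifts by roughly $\lfloor n/2\rfloor-d$. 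Applying induction to $G-v$ and adding the contribution of triangles through $v$ should give the desired inequality on $t(G)$, modulo careful bookkeeping for parity and for the ceiling on $k$.

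The main obstacle, as I anticipate it, sits in the boundary regime $d\approx\lfloor n/2\rfloor$: there the na\"ive Mantel estimate on triangles through $v$ falls just a hair short of what the induction demands. To close the gap one has to exploit finer structural properties of $K_4$-free graphs close to the Tur\'an extremum $T_2(n)$; such graphs are nearly bipartite, and any deviation from bipartiteness forces extra triangles via odd-cycle arguments in the spirit of the Andr\'asfai--Erd\H{o}s--S\'os circle of ideas. Quantifying this extra gain precisely is what should upgrade the $32k/35$ bound of \cite{chinese} to the sharp $k$, and is where I expect the proof to be most delicate.
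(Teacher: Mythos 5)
Your first step matches the paper's: both proofs start from the Huang--Shi lemma. But you have misremembered what that lemma says, and this breaks your reduction. The lemma gives $t_e\ge t/r(P)$, where $r(P)$ is the size of a greedy partition of $G$ into cliques (here: singletons, edges and triangles). So to conclude $t_e\ge k$ you need $t\ge r(P)\cdot k$, i.e.\ a lower bound on the triangle count that \emph{scales with the partition size}, not a ``supersaturation statement'' depending only on $n$ and $e$. This matters: for a $K_4$-free graph with $e\ge n^2/4$ the size $r(P)$ is only bounded by roughly $2n/3$ (it equals $n/2$ for the extremal examples but can be larger), whereas the best possible bound on $t$ as a function of $n$ and $e$ alone is of order $kn/3$ near the Tur\'an threshold (Fisher's bound), which after dividing by $r(P)$ does not return $k$. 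The paper's actual key result is exactly the $r$-dependent inequality $t\ge r(P)\,(e-n^2/4)$, valid for \emph{all} graphs; the whole difficulty of the paper is that the quantity being bounded and the quantity you divide by are coupled. Your proposal never engages with this coupling.

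Separately, even for the triangle-counting step you propose, the minimum-degree-deletion induction is only a sketch with a hole you yourself flag at $d\approx n/2$, deferred to unspecified ``Andr\'asfai--Erd\H{o}s--S\'os'' arguments; that is precisely the regime where all the content lies, so nothing has been proved. The paper takes a different and complete route: a Zykov-type symmetrization (replacing a vertex by copies of a non-neighbour whenever this does not decrease the relevant linear functional), terminating at a complete multipartite graph where the inequality is checked directly. The new technical ingredient, absent from your plan, is that the symmetrization must be organized so that a (generalized) greedy partition of non-decreasing size is carried along at every step --- this is what makes the bound $t\ge r(P)(e-n^2/4)$, rather than a weaker $r$-free bound, come out of the argument.
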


This result is best possible, as there is equality in Theorem \ref{thm:main} for every graph which we get by taking a $2$-partite Tur\'an graph and putting a triangle-free graph into one side of this complete bipartite graph. Note that this construction has roughly at most $n^2/4+n^2/16$ edges  while in general in a $K_4$-free graph $k\le n^2/12$, and so it is possible (and we conjecture so) that an even stronger theorem can be proved if we have more edges, for further details see section Remarks.

\section{Proof of Theorem \ref{thm:main}}

From now on we are given a graph $G$ on $n$ vertices and having $e=n^2/4+k$ edges.

\begin{defi}
Denote by $\te$ the maximum number of edge disjoint triangles in $G$ and  by $\ta$ the number of all triangles of $G$.
\end{defi}

The idea is to bound $\te$  by $\ta$. For that we need to know more about the structure of $G$, the next definitions are aiming towards that.

\begin{defi}
A {\bf good partition} $P$ of $V(G)$ is a partition of $V(G)$ to disjoint sets $C_i$ (the cliques of $P$) such that every $C_i$ induces a complete subgraph in $G$.

The {\bf size} $r(P)$ of a good partition $P$ is the number of cliques in it. The cliques of a good partition $P$ are ordered such that their size is non-decreasing: $|C_0|\le|C_1|\le\dots \le|C_{r(P)}|$.

A good partition is a {\bf greedy partition} if for every $l\ge 1$ the union of all the parts of size at most $l$ induces a $K_{l+1}$-free subgraph, that is, for every $i\ge 1$, $C_0\cup C_1\cup\dots\cup C_i$ is $K_{|C_i|+1}$-free.
(See Figure \ref{fig:defgp} for examples.)
\end{defi}

{\it Remark.} In our paper $l$ is at most 3 typically, but in some cases it can be arbitrary.

Note that the last requirement in the definition holds also trivially for $i=0$.

The name greedy comes from the fact that a good partition is a greedy partition if and only if we can build it greedily in backwards order, by taking a maximal size complete subgraph $C\subset V(G)$ of $G$ as the last clique in the partition, and then recursively continuing this process on $V(G)\setminus C$ until we get a complete partition. This also implies that every $G$ has at least one greedy partition. If $G$ is $K_4$-free then a greedy partition is a partition of $V(G)$ to $1$ vertex sets, $2$ vertex sets spanning an edge and $3$ vertex sets spanning a triangle, such that the union of the size 1 cliques of $P$ is an independent set and the union of the size 1 and size 2 cliques of $P$ is triangle-free.

\begin{figure}[t]
    \centering
        \includegraphics[scale=1]{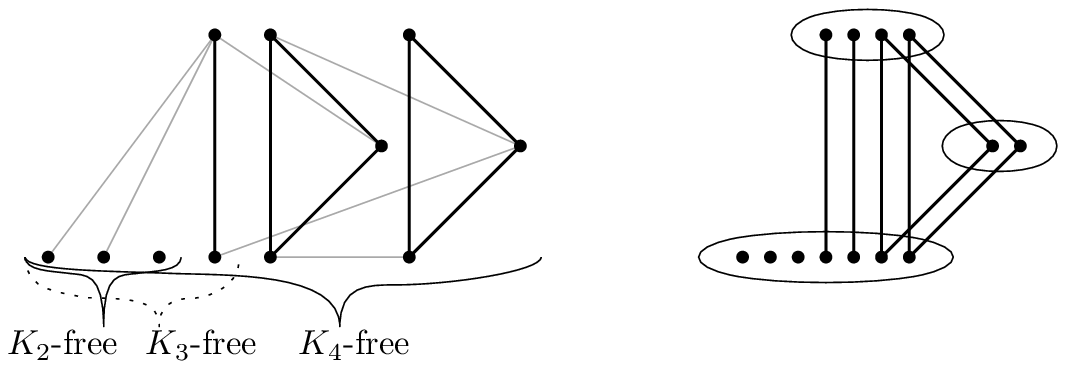}
   \caption{A greedy partition of an arbitrary graph and of a complete $3$-partite graph.}
   \label{fig:defgp}
\end{figure}



\begin{lem}[\cite{chinese}]
Let $G$ be a $K_4$-free graph and $P$ be a greedy partition of $G$. Then
$$\te\ge \frac{\ta}{r(P)}.$$
\end{lem}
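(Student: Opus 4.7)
My plan is to produce a coloring of the triangles of $G$ with $r(P)$ colors such that any two triangles receiving the same color are pairwise edge-disjoint; once this is in hand, pigeonhole on the color classes immediately yields a class with at least $\ta/r(P)$ pairwise edge-disjoint triangles, proving $\te \ge \ta/r(P)$.

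For each vertex $v \in V(G)$ let $i(v)$ denote the index of the clique of $P$ that contains $v$, viewed as one of the $r(P)$ distinct clique labels, and color each triangle $T$ of $G$ by
\[
  c(T) \;=\; \Bigl(\sum_{v \in T} i(v)\Bigr) \bmod r(P).
\]
The only thing to verify is that two distinct triangles which share an edge receive different colors. Write such a pair as $T = \{u,v,w\}$ and $T' = \{u,v,w'\}$ with $w \ne w'$. Then $c(T) - c(T') \equiv i(w) - i(w') \pmod{r(P)}$, and since $i$ takes only $r(P)$ distinct values we have $|i(w) - i(w')| < r(P)$, so $c(T) = c(T')$ would force $i(w) = i(w')$: that is, $w$ and $w'$ lie in a common clique of $P$, making $ww'$ an edge of $G$. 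But then all six edges of $\{u,v,w,w'\}$ are present, giving a $K_4$ in $G$ and contradicting the hypothesis. Hence same color implies edge-disjoint, as required.

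I do not anticipate any serious obstacle: the only creative step is to guess the right invariant, and since two edge-sharing triangles differ in exactly one vertex, ``sum of clique indices modulo $r(P)$'' is the natural candidate. It is worth noting that this argument uses only that $P$ is a clique partition of $V(G)$ and that $G$ is $K_4$-free --- the greedy property of $P$ is not actually invoked in the proof of the lemma itself, and presumably becomes essential only in later steps that bound $\ta$ or $r(P)$ in terms of structural parameters of $G$.
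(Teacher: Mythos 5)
Your proof is correct and is essentially identical to the paper's: the same coloring of triangles by the sum of clique indices modulo $r(P)$, followed by pigeonhole, with the paper leaving the "same color implies edge-disjoint" verification as an easy check that you spell out. Your closing observation that only the good-partition property (not greediness) is used here is also accurate.
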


For sake of keeping the paper self-contained, we prove this lemma too.

\begin{proof}
Let $r=r(P)$ and the cliques of the greedy partition be $C_0,C_1,\dots C_{r-1}$. With every vertex $v\in C_i$ we associate the value $h(v)=i$ and with every triangle of $G$ we associate the value $h(T)=\sum _{v\in T} h(v) \mod r$. As there are $r$ possible associated values, by the pigeonhole principle there is a family $\cal T$ of at least $\ta/r$ triangles that have the same associated value. It's easy to check that two triangles sharing an edge cannot have the same associated value if $G$ is $K_4$-free, thus $\cal T$ is a family of at least $\ta/r$ edge-disjoint triangles in $G$, as required.
\end{proof}

 It implies that $\te\ge \frac{\ta}{R(P)}$, moreover the inequality is true for every $P$.
Note that the next theorem holds for every graph, not only for $K_4$-free graphs.

\begin{thm}\label{thm:tbound}
Let $G$ be a graph and $P$ a greedy partition of $G$. Then $t\ge r(P)\cdot(e-n^2/4)$.
\end{thm}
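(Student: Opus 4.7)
My plan is to reformulate the target inequality $\ta \ge r(e - n^2/4)$ as a degree-sum inequality and attack that using the greedy partition structure. The starting point is the Moon--Moser-style observation: for every edge $uv \in E(G)$,
\[ |N(u) \cap N(v)| = d(u) + d(v) - |N(u) \cup N(v)| \ge d(u) + d(v) - n, \]
so summing over edges yields $3\ta \ge \sum_v d(v)^2 - ne$. Hence it suffices to establish the degree-sum bound $\sum_v d(v)^2 \ge (n + 3r)e - 3rn^2/4$, which after rearrangement is equivalent to
\[ \sum_v \bigl(d(v) - n/2\bigr)\bigl(d(v) - 3r/2\bigr) \ge 0. \]

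To prove this, I would exploit the greedy property systematically. For each $v \in C_i$ and each $j > i$, the partition forces $v$ to have at most $|C_j| - 1$ neighbors in $C_j$ (otherwise $\{v\} \cup C_j$ would be a $K_{|C_j|+1}$ inside the $K_{|C_j|+1}$-free set $C_0 \cup \cdots \cup C_j$), and an analogous constraint holds for $j < i$ whenever $|C_j| = |C_i|$. Combined with the trivial lower bound $d(v) \ge |C_i| - 1$ from edges inside the clique, these constraints tightly control the degree distribution in terms of the clique sizes $|C_0| \le \cdots \le |C_{r-1}|$ and the index counts $R_p = |\{i : |C_i| \ge p\}|$. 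In particular, one obtains $d(v) \le n - R_{|C_i|}$ for every $v$.

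The delicate step is the final inequality $\sum_v (d(v) - n/2)(d(v) - 3r/2) \ge 0$. The per-vertex terms are negative for degrees strictly between $n/2$ and $3r/2$, so a plain per-vertex argument fails; I would need a charging or clique-by-clique accounting in which positive contributions from high-degree vertices (typically inside the largest cliques) absorb the negative contributions from vertices of moderate degree. The extremal construction (the $2$-partite Tur\'an graph with a triangle-free graph inserted on one side) achieves equality throughout, which makes the accounting delicate: in a small instance with $n = 10$, $k = 4$, $r = 5$, the single degree-$9$ hub contributes $+6$ to the sum and exactly cancels the $-1.5$ contributions from the four degree-$6$ leaves, while the five degree-$5$ vertices on the other side contribute $0$.

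As a backup I would try induction on $r$ by removing the last clique $C_{r-1}$ from $G$, noting that the inherited partition of $G' = G - C_{r-1}$ is still greedy and so by induction $\ta(G') \ge (r-1)(e(G') - (n-|C_{r-1}|)^2/4)$. However, testing this on the same extremal example shows that the naive inductive step falls short by a small additive amount: one must recover extra triangles from the edges between $C_{r-1}$ and the rest in a way that is not transparent from the inductive hypothesis alone. For this reason the direct Moon--Moser approach with greedy refinement seems the most promising route.
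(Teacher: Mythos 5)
Your proposal is not yet a proof---you explicitly leave the key inequality $\sum_v (d(v)-n/2)(d(v)-3r/2)\ge 0$ unproved, hoping for a ``charging or clique-by-clique accounting''---and unfortunately the gap cannot be closed, because that inequality is false and, worse, the very first reduction already loses too much. Take $G=K_{5,5}$ plus a two-edge matching inside one side, so $n=10$, $e=27$, $e-n^2/4=2$. A greedy partition consists of two triangles (each a matching edge plus a vertex of the other side), one further edge, and two singletons, so $r=5$ and the theorem demands $t\ge 10$; indeed $t=10$ exactly (each matching edge forms a triangle with each of the five opposite vertices). But the degree sequence is four $6$'s and six $5$'s, so $\sum_v d(v)^2-ne=294-270=24$ and the Moon--Moser bound only yields $t\ge 8$; equivalently $\sum_v(d(v)-5)(d(v)-7.5)=4\cdot 1\cdot(-1.5)=-6<0$. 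Your $n=10$, $k=4$ star example happens to work because the hub's large degree rescues the sum, but replacing the star by a matching kills it: the bound $3t\ge\sum_{uv\in E}(d(u)+d(v)-n)$ is tight only when every edge has $N(u)\cup N(v)=V$, and the extremal configurations for this theorem do not satisfy that. No refinement of the degree analysis can help, since the degree sequence alone does not distinguish this graph from ones with more triangles. Your backup induction on $r$ you already observed falls short.

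For comparison, the paper proves the theorem by Zykov-style symmetrization: it tracks the functional $f(G,P)=r(e-n^2/4)-t$, repeatedly replaces a vertex class by copies of a neighboring vertex (choosing whichever representative has the larger value of $r_0d_v-t_v$, which never decreases $f$ because $f$ is linear in these local quantities once the two classes are non-adjacent), and uses Lemma \ref{lem:twocliques} to find the non-adjacent pairs to merge. The subtle point, which is where the greedy-partition hypothesis enters, is that the symmetrization must be organized so that a \emph{generalized} greedy partition of size at least $r$ survives each round; the process terminates at a complete multipartite graph, where the inequality is verified directly (Lemma \ref{lem:completepartite}). If you want to salvage a counting approach, you would need a triangle count that sees more than degrees---but the symmetrization route is the one that is known to work here.
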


By choosing an arbitrary greedy partition $P$ of $G$, the above lemma and theorem together imply that for a $K_4$-free $G$ we have $t_e\ge \frac{\ta}{r(P)}\ge e-n^2/4=k$, concluding the proof of Theorem \ref{thm:main}.

\bigskip

Before we prove Theorem \ref{thm:tbound}, we make some preparations.

\begin{lem}\label{lem:twocliques}
Given a $K_{b+1}$-free graph $G$ on vertex set $A\cup B$, $|A|=a\le b=|B|$, $A$ and $B$ both inducing complete graphs, there exists a matching of non-edges between $A$ and $B$ covering $A$. In particular, $G$ has at least $a$ non-edges.
\end{lem}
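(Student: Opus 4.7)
The plan is to apply Hall's marriage theorem to a bipartite auxiliary graph. Define $H$ to be the bipartite graph on parts $A$ and $B$ in which $v \in A$ and $w \in B$ are joined exactly when $vw$ is a \emph{non-edge} of $G$. A matching of $H$ that saturates $A$ is precisely a matching of non-edges between $A$ and $B$ covering $A$, so it suffices to verify Hall's condition in $H$.

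Take any $S \subseteq A$ and set $N := N_H(S) \subseteq B$. By the definition of $H$, the vertices of $B \setminus N$ are precisely the vertices of $B$ that are adjacent in $G$ to every vertex of $S$. Suppose for contradiction that $|N| < |S|$, so that $|B \setminus N| > b - |S|$, i.e.\ $|B \setminus N| \ge b - |S| + 1$. Now consider $S \cup (B \setminus N)$: the set $S$ induces a clique since $A$ does, the set $B \setminus N$ induces a clique since $B$ does, and every vertex of $S$ is adjacent to every vertex of $B \setminus N$ by construction. Hence $S \cup (B \setminus N)$ induces a complete subgraph of size at least $|S| + (b - |S| + 1) = b+1$, contradicting that $G$ is $K_{b+1}$-free.

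Therefore Hall's condition holds in $H$, so $H$ admits a matching saturating $A$. The edges of this matching are $a$ pairwise vertex-disjoint non-edges of $G$ between $A$ and $B$ that together cover $A$, which is exactly the required matching; in particular $G$ has at least $a$ non-edges. The only subtlety worth flagging is keeping the direction of the complementation straight (non-edges in $G$ are edges in $H$, and vice versa), but once $H$ is set up correctly the argument reduces to a standard clique-extension obstruction against Hall's condition.
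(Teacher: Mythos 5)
Your proof is correct and follows essentially the same route as the paper: apply Hall's theorem to the bipartite graph of non-edges between $A$ and $B$, and refute a violating set $S$ by observing that $S$ together with the vertices of $B$ adjacent to all of $S$ would form a clique on at least $b+1$ vertices. Your bookkeeping with $|S|$ is in fact slightly cleaner than the paper's, which bounds via $a-1$ instead of $|S|-1$.
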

\begin{proof}
Denote by $\bar G$ the complement of $G$ (the edges of $\bar G$ are the non-edges of $G$).
To be able to apply Hall's theorem, we need that for every subset $A'\subset A$ the neighborhood $N(A')$ of $A'$ in $\bar G$ intersects $B$ in at least $|A'|$ vertices. Suppose there is an $A'\subset A$ for which this does not hold, thus for $B'=B\setminus N(A')$ we have $|B'|= |B|-|B\cap N(A')|\ge b-(a-1)$. Then $A'\cup B'$ is a complete subgraph of $G$ on at least $a+b-(a-1)=b+1$ vertices, contradicting that $G$ is $K_{b+1}$-free.
\end{proof}

\begin{obs}\label{obs:Pdoesnotmatter}
If $G$ is complete $l$-partite for some $l$ then it has essentially one greedy partition, i.e., all greedy partitions of $G$ have the same clique sizes and have the same number of cliques, which is the size of the biggest part (biggest independent set) of $G$.
\end{obs}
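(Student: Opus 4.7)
The plan is to show that although different greedy partitions of a complete $l$-partite graph $G$ may assign different specific vertices to different cliques, the multiset of clique sizes, and hence the total count $r(P)$, are forced. Write the parts of $G$ as $V_1, \ldots, V_l$ with $|V_1| \ge |V_2| \ge \cdots \ge |V_l| \ge 1$. By the backwards characterization of greedy partitions recalled in the excerpt, any greedy partition arises by iteratively extracting a maximum-size clique, so it suffices to analyze this extraction process.

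The first ingredient is an elementary structural remark about complete multipartite graphs: a clique meets each part in at most one vertex, so the maximum clique size equals the number of currently non-empty parts, and every maximum clique uses exactly one vertex from each such part. Consequently, after extracting one maximum clique, the remainder is again complete multipartite on the same collection of parts, with each non-empty part's size decreased by exactly $1$.

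With this in hand, the sequence of clique sizes is forced by induction on $|V(G)|$. The first $n_l$ extracted cliques each have size $l$ and each uses one vertex of $V_l$; after these extractions $V_l$ is exhausted, and the residual graph is complete $(l-1)$-partite with part sizes $n_1 - n_l, \ldots, n_{l-1} - n_l$. Iterating, one obtains exactly $n_{l-j} - n_{l-j+1}$ cliques of size $l-j$ for $j = 0, 1, \ldots, l-1$, where we set $n_{l+1} := 0$. The multiset of clique sizes therefore depends only on $(n_1, \ldots, n_l)$, and a telescoping sum gives $r(P) = n_1$, the size of the biggest part.

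The main (and quite minor) obstacle is just to justify that the greedy procedure has no nontrivial freedom at the level of sizes: two greedy partitions may well differ in which vertex of each part is grouped with which vertex of another part, but the structural observation above pins down the size of the clique extracted at every stage. The rest is bookkeeping and telescoping, with no combinatorial surprises.
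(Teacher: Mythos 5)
Your proof is correct. The paper states this observation without proof, and your argument --- that in a complete multipartite graph every maximum clique takes exactly one vertex from each nonempty part, so the backwards-greedy characterization forces the multiset of clique sizes and a telescoping count $r(P)=|V_1|$ --- is precisely the justification the authors leave implicit.
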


We regard the following function depending on $G$ and $P$ (we write $r=r(P))$: $$f(G,P)=r(e-n^2/4)-t.$$ We are also interested in the function $$g(G,P)=r(e-r(n-r))-t.$$ Notice that $g(G,P)\ge f(G,P)$ and  $f$ is a monotone increasing function of $r$ (but $g$ is not!) provided that $e-n^2/4\ge 0$. Also, using Observation \ref{obs:Pdoesnotmatter} we see that if $G$ is complete multipartite then $r$, $f$ and $g$ do not depend on $P$, thus in this case, we may write simply $f(G)$ and $g(G)$.

\begin{lem}\label{lem:completepartite}
If $G$ is a complete $l$-partite graph then $g(G)\le 0$ and if $G$ is complete $3$-partite (some parts can have size $0$) then $g(G)= 0$.
\end{lem}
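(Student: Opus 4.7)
The plan is to evaluate $g(G)$ directly in terms of the part sizes of $G$ and watch the resulting cancellation. Write the parts of $G$ as $V_1,\ldots,V_l$ with sizes $n_1\le n_2\le\cdots\le n_l$ summing to $n$. By Observation \ref{obs:Pdoesnotmatter}, the quantity $r=r(P)$ equals $n_l$ regardless of the greedy partition $P$, so $g(G)$ is well-defined as a function of $G$ alone, and I may work with this specific value of $r$.

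The key idea is to isolate the largest part in the expansions of both $e$ and $t$. Splitting $e=\sum_{i<j} n_i n_j$ according to whether $j=l$ should give
$$e \;=\; n_l\sum_{i<l} n_i \;+\; \sum_{i<j<l} n_i n_j \;=\; r(n-r)+\sum_{i<j<l} n_i n_j,$$
so that $r(e-r(n-r))=n_l\sum_{i<j<l} n_i n_j$. Splitting $t=\sum_{i<j<k} n_i n_j n_k$ according to whether $k=l$ should similarly give
$$t \;=\; n_l\sum_{i<j<l} n_i n_j \;+\; \sum_{i<j<k<l} n_i n_j n_k.$$
Subtracting the second identity from the first, the common bulky term cancels, leaving
$$g(G) \;=\; r\bigl(e-r(n-r)\bigr)-t \;=\; -\sum_{i<j<k<l} n_i n_j n_k.$$

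This is manifestly non-positive, proving $g(G)\le 0$ for every complete $l$-partite $G$. For the second assertion, when $l\le 3$ (which covers "$3$-partite with empty parts permitted") the range $i<j<k<l$ is empty, so the above sum vanishes and $g(G)=0$. There is no real obstacle in this argument; the only points needing attention are to invoke Observation \ref{obs:Pdoesnotmatter} so that $g(G)$ is partition-independent, and to keep the indexing of the two splittings straight so that the cancellation is visible at a glance.
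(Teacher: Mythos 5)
Your proposal is correct and follows essentially the same route as the paper: invoke Observation \ref{obs:Pdoesnotmatter} to fix $r$ as the largest part size, expand $e$ and $t$ over the part sizes, split off the terms involving the largest part, and observe that $g(G)$ collapses to $-\sum_{i<j<k<l} n_i n_j n_k$, which is nonpositive and vanishes when $l\le 3$. No discrepancies to report.
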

\begin{proof}
Let $G$ be a complete $l$-partite graph with part sizes $c_1\le \dots \le c_l$. By Observation \ref{obs:Pdoesnotmatter}, $r=r(P)=c_l$ for any greedy partition.
We have $n=\sum_i c_i$, $e=\sum_{i< j} c_ic_j$, $t=\sum_{i< j< m} c_ic_jc_m$ and so
$$g(G)= r(e-r(n-r))-t=c_l(\sum_{i< j} c_ic_j-c_l\sum_{i< l} c_i)-t=$$ $$=c_l\sum_{i<j<l} c_ic_j-\sum_{i<j<m} c_ic_jc_m=-\sum_{i<j<m<l} c_ic_jc_m\le 0.$$
Moreover, if $l\le 3$ then there are no indices $i<j<m<l$ thus the last equality also holds with equality.
\end{proof}

\begin{figure}[t]
    \centering
        \includegraphics[scale=1]{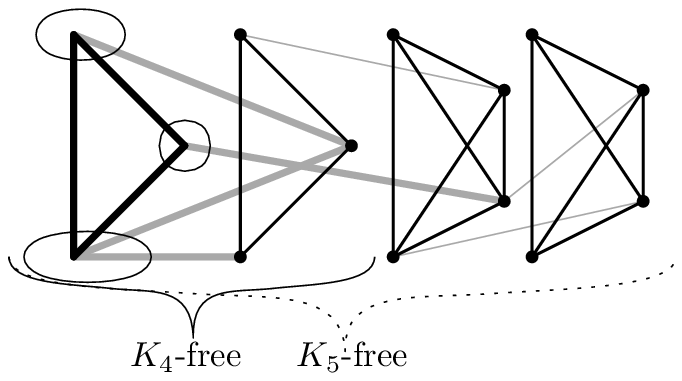}
   \caption{A generalized greedy partition of an arbitrary graph (heavy edges represent complete bipartite graphs) .}
   \label{fig:defggp}
\end{figure}

In the proof we need a generalization of a greedy partition, which is similar to a greedy partition, with the only difference that the first part $C_0$ in the partition $P$ is a blow-up of a clique instead of a clique. see Figure \ref{fig:defggp} for an example.

\begin{defi}
A $P$ {\bf generalized greedy partition} ({\bf ggp} in short) of some graph $G$ is a partition of $V(G)$ into the sequence of disjoint sets $C_0,C_1,\dots C_l$ such that $C_0$ induces a complete $l_0$-partite graph, $C_i, i\ge 1$ induces a clique and $l_0\le |C_1|\le \dots |C_l|$. We require that for every $i\ge 1$, $C_0\cup C_1\cup\dots\cup C_i$ is $K_{|C_i|+1}$-free.

We additionally require that if two vertices are not connected in $C_0$ (i.e., are in the same part of $C_0$) then they have the same neighborhood in $G$, i.e., vertices in the same part of $C_0$ are already symmetric.

The {\bf size} $r(P)$ of a greedy partition $P$ is defined as the size of the biggest independent set of $C_0$ plus $l-1$, the number of parts of $P$ besides $C_0$.
\end{defi}

Note that the last requirement in the definition holds also for $i=0$ in the natural sense that $C_0$ is $l_0+1$-free.

Observe that the requirements guarantee that in a ggp $P$ if we contract the parts of $C_0$ (which is well-defined because of the required symmetries in $C_0$) then $P$ becomes a normal (non-generalized) greedy partition (of a smaller graph).

Using Observation \ref{obs:Pdoesnotmatter} on $C_0$, we get that the size of a ggp $P$ is equal to the size of any underlying (normal) greedy partition $P'$ of $G$ which we get by taking any greedy partition of $C_0$ and then the cliques of $P\setminus\{C_0\}$. Observe that for the sizes of $P$ and $P'$ we have $r(P)=r(P')$, in fact this is the reason why the size of a ggp is defined in the above way.

Finally, as we defined the size $r(P)$ of a ggp $P$, the definitions of the functions $f(G,P)$ and $g(G,P)$ extend to a ggp $P$ as well. With this notation Lemma \ref{lem:completepartite} is equivalent to the following:

\begin{cor}\label{cor:onepartggp}
If a ggp $P$ has only one part $C_0$, which is a blow-up of an $l_0$-partite graph, then $r(G,P)\le 0$ and if $l_0\le 3$ then $r(G,P)=0$.
\end{cor}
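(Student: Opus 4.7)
The plan is to reduce this corollary directly to Lemma \ref{lem:completepartite}, by showing that the one-part ggp hypothesis forces $G$ itself to be a complete multipartite graph, so the corollary is essentially just a notational restatement (as the paragraph before the corollary already promises: the corollary is claimed to be equivalent to Lemma \ref{lem:completepartite}).

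Structurally, if the ggp $P$ has the single part $C_0$, then $V(G) = C_0$ and the edges of $G$ are exactly the edges induced on $C_0$. By the definition of a ggp, $C_0$ is a blow-up of a complete $l_0$-partite graph, with the extra requirement that vertices in the same part of $C_0$ have identical neighbourhoods in $G$; since there are no other parts to supply extra edges, $G$ itself is a complete $l_0$-partite graph (with parts possibly of larger size, some possibly empty). Next I would argue that the quantity $r(G,P)$ in the corollary statement reduces to the quantity addressed in Lemma \ref{lem:completepartite}. The paragraph immediately preceding the corollary extends $f(G,P)$ and $g(G,P)$ to ggps and explicitly points out that contracting the parts of $C_0$ converts the ggp $P$ into an ordinary greedy partition $P'$ with $r(P) = r(P')$, so the relevant numerical invariants are preserved. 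Combined with Observation \ref{obs:Pdoesnotmatter}, which tells us that every greedy partition of a complete multipartite graph yields the same clique sizes and the same number of cliques, this shows that $r(G,P)$ depends only on $G$ and coincides with the value used in Lemma \ref{lem:completepartite}.

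The proof would then conclude by invoking Lemma \ref{lem:completepartite} on the complete $l_0$-partite graph $G$: that lemma gives the bound $\le 0$ in general and equality $= 0$ whenever $l_0 \le 3$, matching precisely the two assertions of the corollary. The only obstacle, and really just a bookkeeping one, is to verify that the size of the ggp $P$ (defined as the size of the biggest independent set of $C_0$ plus the number of additional parts) agrees in this one-part setting with the size of the largest part of $G$ as a complete multipartite graph, which is exactly what Observation \ref{obs:Pdoesnotmatter} uses. Since the ggp-size was engineered precisely so that this passage through contraction is free, the reduction is immediate and the corollary follows.
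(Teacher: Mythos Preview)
Your proposal is correct and matches the paper's approach exactly: the paper does not give a separate proof of the corollary but simply declares it equivalent to Lemma~\ref{lem:completepartite}, and you have spelled out precisely why that equivalence holds (a one-part ggp forces $G$ itself to be complete $l_0$-partite, and the ggp size coincides with the size of any ordinary greedy partition of $G$ via Observation~\ref{obs:Pdoesnotmatter}). Note also that the ``$r(G,P)$'' in the corollary statement is evidently a typo for ``$g(G,P)$''; your reading implicitly corrects this, which is the only sensible interpretation.
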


\begin{proof}[Proof of Theorem \ref{thm:tbound}]

The theorem is equivalent to the fact that for every graph $G_0$ and greedy partiton $P_0$ we have $f(G_0,P_0)\le 0$.

Let us first give a brief summary of the proof. We will repeatedly do some symmetrization steps, getting new graphs and partitions, ensuring that during the process $f$ cannot decrease. At the end we will reach a complete $l$-partite graph $G_*$ for some $l$. However by Lemma \ref{lem:completepartite} for such graphs $g(G_*,P_*)\le 0$ independent of $P_*$, which gives $f(G_0,P_0)\le f(G_*)\le g(G_*)\le 0$. This proof method is similar to the proof from the book of Bollob\'as \cite{bollobas} (section VI. Theorem 1.7.) for a (not optimal) lower bound on $t$ by a function of $e,n$. An additional difficulty comes from the fact that our function also depends on $r$, thus during the process we need to maintain a greedy partition whose size is not decreasing either.

\begin{figure}[t]
    \centering
        \includegraphics[scale=0.8]{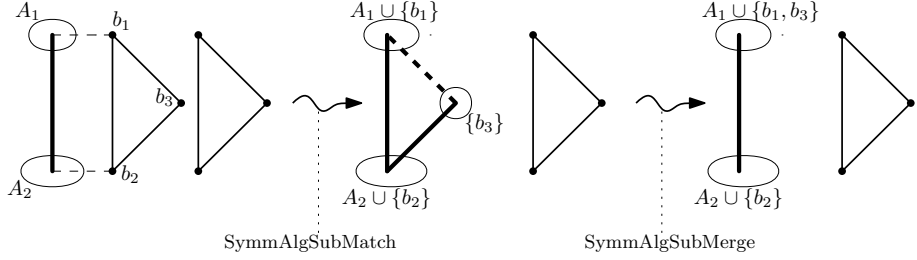}
   \caption{One step of the symmetrization algorithm SymmAlg (dashed lines denote non-edges).}
   \label{fig:alg}
\end{figure}

Now we give the details of the symmetrization. The algorithm SymmAlg  applies the symmetrization algorithms SymmAlgSubMatch and SymmAlgSubMerge alternately, for an example see Figure \ref{fig:alg}.

{\bf SymmAlg:}

We start the process with the given $G_0$ and $P_0$. $P_0$ is a normal greedy partition which can be regarded also as a ggp in which in the first blown-up clique $C_0$ all parts have size $1$.

In a general step of SymmAlg before running SymmAlgSubMatch we have a $G$ and a ggp $P$ of $G$ such that $f(G_0,P_0)\le f(G,P)$. This trivially holds (with equality) before the first run of SymmAlgSubMatch.

{\bf SymmAlgSubMatch:}

If the actual ggp $P$ contains only one part $C_0$ (which is a blow-up of a clique) then we {\bf STOP} SymmAlg.

Otherwise we do the following.
Let the blown-up clique $C_0$ be complete $l$-partite. Temporarily contract the parts of $C_0$ to get a smaller graph in which $P$ becomes a normal greedy partition $P_{temp}$, let $A$ ($|A|=a$) be the first clique (the contraction of $C_0$) and $B=C_1$ ($a\le b=|B|$) be the second clique of $P_{temp}$. As $P$ is a greedy partition, $A\cup B$ must be $K_{b+1}$-free, so we can apply Lemma \ref{lem:twocliques} on $A$ and $B$ to conclude that there is a matching of non-edges between $A$ and $B$ that covers $A$. In $G$ this gives a matching between the parts of the blown-up clique $C_0$ and the vertices of the clique $C_1$ such that if a part $A_i\subset C_0$ is matched with $b_i\in C_1$ then there are no edges in $G$ between $A_i$ and $b_i$.

For every such pair $(A_i,b_i)$ we do the following symmetrization. Let $v\in A_i$ an arbitrary representative of $A_i$ and $w=b_i$. Fix $r_0=r(P)$ and let $f_v=r_0d_v-t_v$ where $d_v$ is the degree of $v$ in $G$ and $t_v$ is the number of triangles in $G$ incident to $v$, or equivalently the number of edges spanned by $N(v)$. Similarly $f_w=r_0d_w-t_w$. Clearly, $f(G,P)=r_0(e-n^2/4)-t=|A_i|f_v+f_w+f_0$ where $f_0$ depends only on the graph induced by the vertices of $V(G)\setminus (A_i\cup\{w\})$. Here we used that there are no edges between $A_i$ and $b_i$. If $f_v\ge f_w$ then we replace $w$ by a copy of $v$ to get the new graph $G_1$, otherwise we replace $A_i$ by $|A_i|$ copies of $w$ to get the new graph $G_1$. In both cases
$$r_0(e_1-n^2/4)-t_1=(|A_i|+1)\max(f_{v},f_{w})+f_0\ge$$$$\ge |A_i|f_v+f_w+f_0=r_0(e-n^2/4)-t.$$
Note that after this symmetrization $V(G)\setminus (A_i\cup\{w\})$ spans the same graph, thus we can do this symmetrization for all pairs $(A_i,b_i)$ one-by-one (during these steps for some vertex $v$ we define $f_v$ using the $d_v$ and $t_v$ of the current graph, while $r_0$ remains fixed) to get the graphs $G_2,G_3,\dots$. At the end we get a graph $G'$ for which
 $$r_0(e'-n^2/4)-t'\ge r_0(e-n^2/4)-t=f(G,P).$$ Now we proceed with SymmAlgSubMerge, which modifies $G'$ further so that the final graph has a ggp of size at least $r_0$.

{\bf SymmAlgSubMerge:}

In this graph $G'$ for all $i$ all vertices in $A_i\cup \{b_i\}$ have the same neighborhood (and form independent sets). Together with the non-matched vertices of $C_1$ regarded as size-$1$ parts we get that in $G'$ the graph induced by $C_0\cup C_1$ is a blow-up of a (not necessarily complete) graph on $b$ vertices. To make this complete we make another series of symmetrization steps. Take an arbitrary pair of parts $V_1$ and $V_2$ which are not connected (together they span an independent set) and symmetrize them as well: take the representatives $v_1\in V_1$ and $v_2\in V_2$ and then $r_0(e'-n^2/4)-t'=|V_1|f_{v_1}+|V_2|f_{v_2}+f_1$ as before, $f_1$ depending only on the subgraph spanned by $G'\setminus (V_1\cup V_2)$. Again replace the vertices of $V_1$ by copies of $v_2$ if $f_2\ge f_1$ and replace the vertices of $V_2$ by copies of $v_1$ otherwise. In the new graph $G'_1$, we have
 $$r_0(e_1'-n^2/4)-t_1'=(|V_i|+|V_j|)\max(f_{v_1},f_{v_2})+f_0\ge$$$$\ge |V_1|f_{v_1}+|V_2|f_{v_2}+f_0=r_0(e'-n^2/4)-t'.$$
 Now $V_1\cup V_2$ becomes one part and in $G'_1$ $C_0\cup C_1$ spans a blow-up $C_0'$ of a (not necessarily complete) graph with $b-1$ parts. Repeating this process we end up with a graph $G''$ for which
$$r_0(e''-n^2/4)-t''\ge r_0(e'-n^2/4)-t'\ge f(G,P).$$
 In $G''$ $C_0\cup C_1$ spans a blow-up $C_0''$ of a complete graph with at most $|C_1|$ parts. Moreover $V\setminus(C_0\cup C_1)$ spans the same graph in $G''$ as in $G$, thus $C_0''$ together with the cliques of $P$ except $C_0$ and $C_1$ have all the requirements to form a ggp $P''$. If the biggest part of $C_0$ was of size $c_l$ then in $C_0'$ this part became one bigger and then it may have been symmetrized during the steps to get $G''$, but in any case the biggest part of $C_0''$ is at least $c_l+1$ big. Thus the size of the new ggp $P''$ is $r(P'')\ge c_l+1+(r(P)-c_l-1)\ge r(P)=r_0$.

If $e''-n^2/4< 0$, then we {\bf STOP} SymmAlg and conclude that we have $f(G_0,P_0)\le f(G,P)\le 0$, finishing the proof.
Otherwise $$f(G'',P'')=r(P'')(e''-n^2/4)-t''\ge r_0(e''-n^2/4)-t''\ge f(G,P)\ge f(G_0,P_0),$$
and so $G'',P''$ is a proper input to SymmAlgSubMatch. We set $G:=G''$ and $P:=P''$ and {\bf GOTO} SymmAlgSubMatch. Note that the number of parts in $P''$ is one less than it was in $P$.
This ends the description of the running of  SymmAlg.

As after each SymmAlgSubMerge the number of cliques in the gpp strictly decreases, SymmAlg must stop until finite many steps.
When SymmAlg STOPs we either can conclude that $f(G_0,P_0)\le 0$ or SymmAlg STOPped because in the current graph $G_*$ the current gpp $P_*$ had only one blow-up of a clique. That is, the final graph $G_*$ is a complete $l_*$-partite graph for some $l_*$ (which has essentially one possible greedy partition). We remark that if the original $G$ was $K_m$-free for some $m$ then $G_*$ is also $K_m$-free, i.e., $l_*\le m-1$. As $f$ never decreased during the process we get using Corollary \ref{cor:onepartggp} that $f(G_0,P_0)\le f(G_*,P_*)\le g(G_*,P_*)\le 0$, finishing the proof of the theorem.
\end{proof}

\section{Remarks}

In the proof of Theorem \ref{thm:tbound}, we can change $f$ to any function that depends on $r,n,e,t,k_4,k_5,\dots $, (where $k_i(G)$ is the number of complete $i$-partite graphs of $G$) and is monotone in $r$ and is linear in the rest of the variables (when $r$ is regarded as a constant) to conclude that the maximum of such an $f$ is reached for some complete multipartite graph. Moreover, as the symmetrization steps do not increase the clique-number of $G$, if the clique number of $G$ is $m$ then this implies that $f(G,P)$ is upper bounded by the maximum of $f(G_*)$ taken on the family of graphs $G_*$ that are complete $m$-partite (some parts can be empty).

Strengthening Theorem \ref{thm:tbound}, it is possible that we can change $f$ to $g$ and the following is also true:

\begin{conj}\label{conj:nice}
if $G$ is a $K_4$-free graph and $r=r(P)$ is the size of an arbitrary greedy partition of $G$ then $t\ge r(e-r(n-r))$ and so $t_e\ge e-r(n-r)$.
\end{conj}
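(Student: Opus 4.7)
The natural plan is to imitate the proof of Theorem \ref{thm:tbound}, running the same symmetrization algorithm SymmAlg but tracking $g(G,P) = r(P)(e - r(P)(n-r(P))) - t$ in place of $f$. If we can show that $g$ never decreases during SymmAlg, then since the algorithm preserves $K_4$-freeness and terminates at a complete $l_*$-partite graph $G_*$ with $l_* \le 3$, Lemma \ref{lem:completepartite} gives $g(G_*, P_*) = 0$, hence $g(G_0, P_0) \le 0$. This is the first inequality $t \ge r(e - r(n-r))$ of the conjecture, and the second inequality $t_e \ge e - r(n-r)$ then follows from the lemma of \cite{chinese}: $t_e \ge t/r \ge e - r(n-r)$.

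For the substeps of SymmAlg that only modify the graph while keeping $r$ fixed at $r_0$ --- the individual vertex-duplications of SymmAlgSubMatch and the within-$C_0 \cup C_1$ part-merging symmetrizations of SymmAlgSubMerge --- no new argument is required: since $g - f = r(n/2 - r)^2$ depends only on $r$ and $n$, both fixed, the inequality $\Delta f \ge 0$ proved in Theorem \ref{thm:tbound} immediately upgrades to $\Delta g \ge 0$. The genuinely new issue is the partition-update part of SymmAlgSubMerge, where the graph $G''$ is held fixed but the ggp is replaced from $P$ (size $r_0$) by $P''$ (size $r' \ge r_0$). A direct calculation, using the factorization $r'^2(n-r') - r_0^2(n-r_0) = (r'-r_0)\bigl(n(r'+r_0) - (r'^2 + r' r_0 + r_0^2)\bigr)$, yields
\[ g(G'', P'') - g(G'', P) \;=\; (r' - r_0)\bigl(e'' - n(r'+r_0) + r'^2 + r' r_0 + r_0^2\bigr), \]
so for $r' > r_0$ the task reduces to proving the structural edge bound
\[ e'' \;\ge\; n(r'+r_0) - (r'^2 + r' r_0 + r_0^2). \]

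Establishing this edge bound is the main technical obstacle we anticipate. The plan is to extract it from the rigid structure of $G''$ after Step 2a: $C_0 \cup C_1$ induces a complete $l''$-partite graph with $l'' \le 3$ (by $K_4$-freeness), whose part sizes are controlled by $c_l$ (the size of the largest part of the old $C_0$) and by the number of merges performed, with $r' - r_0$ measuring how much merging pushed the largest part past $c_l+1$; furthermore, the greedy condition on the original $P$ provides a matching of non-edges between consecutive cliques, which constrains the neighborhoods of the vertices of $C_0 \cup C_1$ into the rest of $G''$, whose induced subgraph is unchanged from $G$. Since $l'' \in \{1,2,3\}$, only finitely many merging patterns can produce a strict increase $r' > r_0$, so it should be tractable to split into cases according to $l''$ and $r'-r_0$ and verify the edge inequality in each one by a direct count of the edges forced inside and outside $C_0 \cup C_1$. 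Once this step is carried out, the argument concludes exactly as for Theorem \ref{thm:tbound}, yielding $g(G_0,P_0) \le 0$ as required.
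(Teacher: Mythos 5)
This statement is stated in the paper as Conjecture~\ref{conj:nice}; the authors do not prove it, and they point out exactly the obstruction your plan runs into: ``we cannot prove it using the same methods, as it is not monotone in $r$.'' Your setup is correct as far as it goes. Since $g-f=r(n/2-r)^2$ depends only on $n$ and $r$, the fixed-$r_0$ symmetrization steps of SymmAlgSubMatch and SymmAlgSubMerge do preserve the desired inequality for $g$ exactly as for $f$, and your factorization of $g(G'',P'')-g(G'',P)$ is algebraically right. But this means the entire content of the conjecture is now concentrated in the unproved inequality $e''\ge n(r'+r_0)-(r'^2+r'r_0+r_0^2)$ for the steps with $r'>r_0$, and for that you offer only the expectation that a finite case analysis will work. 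That is the gap, and it is not a routine one.

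Concretely, nothing in the algorithm supplies a lower bound on $e''$ of the required strength: all the structural information you invoke (Lemma~\ref{lem:twocliques}, the matching of non-edges, Claim~\ref{claim:r2}) bounds $e''$ from \emph{above}, not below. A graph consisting of a ggp with no edges between its cliques satisfies every constraint you list and violates your bound by a margin of order $nr'$: for instance, if $C_0''$ is an independent set of size $p$ and there are $q$ further disjoint edge-cliques with no cross edges, then $n=p+2q$, $r'=p+q$, $e''=q$, and with $r_0=r'-1$ your inequality reduces to $q\le p-1$, which fails whenever the number of small cliques exceeds the largest part. To close the gap you would have to show that no such sparse configuration can occur at a step where $r'$ strictly exceeds $r_0$, and SymmAlg gives no handle on this. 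Even the natural fallback of stopping whenever $g(G'',P)\le 0$ (i.e.\ $e''\le r_0(n-r_0)$) leaves open the window $r_0(n-r_0)<e''<n(r'+r_0)-(r'^2+r'r_0+r_0^2)$, which is nonempty precisely when $r'+r_0<n$. A further small inaccuracy: the increment $r'-r_0$ is not bounded by a function of $l''\le 3$, since merged parts can be arbitrarily large, so ``only finitely many merging patterns'' is not literally true. In short, you have a reasonable plan that reduces the conjecture to an open edge inequality, but not a proof; the statement remains a conjecture.
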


This inequality is nicer than Theorem \ref{thm:tbound} as it holds with equality for all complete $3$-partite graphs. However, we cannot prove it using the same methods, as it is not monotone in $r$. Note that the optimal general bound for $t$ (depending on $e$ and $n$; see \cite{fishersolow} for $K_4$-free graphs and \cite{fisherpaper, razborov} for arbitrary graphs) does not hold with equality for certain complete $3$-partite graphs, thus in a sense this statement would be an improvement on these results for the case of $K_4$-free graphs (by adding a dependence on $r$). More specifically, it is easy to check that there are two different complete $3$-partite graphs with a given $e,n$ (assuming that the required size of the parts is integer), for one of them Fisher's bound holds with equality, but for the other one it does not (while of course Conjecture \ref{conj:nice} holds with equality in both cases).

As we mentioned in the Introduction, in the examples showing that our theorem is sharp, $k$ is roughly at most $n^2/16$ while in general in a $K_4$-free graph $k\le n^2/12$, thus for bigger $k$ it's possible that one can prove a stronger result. Nevertheless, the conjectured bound $t_e\ge e-r(n-r)$ is exact for every $e$ and $r$ as shown by graphs that we get by taking a complete bipartite graph on $r$ and $n-r$ vertices and putting any triangle-free graph in the $n-r$ sized side. For a greedy partition of size $r$ we have $e\le r(n-r)+(n-r)^2/4$ (follows directly from Claim \ref{claim:r2}, see below), thus these examples cover all combinations of $e$ and $r$, except when $e<r(n-r)$ in which case trivially we have at least $0$ triangles, while the lower bound $e-r(n-r)$ on the triangles is smaller than $0$.

\begin{claim}\label{claim:r2}
If $G$ is a $K_4$-free graph, $P$ is a greedy partition of $G$, $r=r(P)$ is the size of $P$ and $r_2$ is the number of cliques in $P$ of size at least $2$, then $e\le r(n-r)+r_2(n-r-r_2)$.
\end{claim}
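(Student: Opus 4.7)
The plan is to prove the bound by a direct edge count, splitting the cliques of $P$ by size (which must lie in $\{1,2,3\}$ since $G$ is $K_4$-free) and bounding the number of edges of each possible within- or between-clique type.

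Write $a$, $b$, $c$ for the number of cliques of $P$ of size exactly $1$, $2$, $3$, so that $r = a+b+c$, $n = a+2b+3c$ and $r_2 = b+c$. A short calculation gives $n-r = b+2c$ and $n-r-r_2 = c$, which lets me rewrite the target as
\[
r(n-r)+r_2(n-r-r_2) = ab + 2ac + b^2 + 4bc + 3c^2.
\]

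It then suffices to verify a local bound on the number of edges of each type. The within-clique edges total $b+3c$, and I claim that between two distinct cliques of sizes $(1,1),(1,2),(1,3),(2,2),(2,3),(3,3)$ the number of edges is at most $0,1,2,2,4,6$ respectively. The first, second and fourth bounds come directly from the greedy-partition hypothesis: the singletons span an independent set and, together with the size-$2$ cliques, span a triangle-free subgraph, so any extra edge would create a triangle inside that subgraph. The remaining three bounds all follow from a single $K_4$-free observation: any vertex outside a size-$3$ clique $C$ has at most two neighbors in $C$ (otherwise it would form a $K_4$ with $C$), giving $\le 2$, $\le 2\cdot 2 = 4$, and $\le 3 \cdot 2 = 6$ in the respective cases.

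Multiplying each pairwise bound by the number of pairs of that type and adding the within-clique count gives
\[
ab + 2ac + 2\binom{b}{2} + 4bc + 6\binom{c}{2} + b + 3c = ab + 2ac + b^2 + 4bc + 3c^2,
\]
matching the rewritten target. I do not expect a real obstacle; the ``content'' of the argument is simply to notice that the somewhat opaque formula $r(n-r)+r_2(n-r-r_2)$ encodes exactly the sum of these six pairwise edge-bounds plus the trivial within-clique edge count, and each of those bounds is a one-line consequence of either the triangle-freeness enforced by the greedy property on the small cliques, or of $K_4$-freeness for interactions involving a size-$3$ clique.
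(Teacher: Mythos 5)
Your proof is correct and follows essentially the same route as the paper: the paper also splits the edge count over pairs of cliques of sizes $1,2,3$ with exactly the pairwise bounds $0,1,2,2,4,6$ plus the within-clique count, and the same algebra. The only cosmetic difference is that the paper derives all six bounds uniformly from Lemma~\ref{lem:twocliques} (a clique pair of sizes $a\le b$ whose union is $K_{b+1}$-free has at least $a$ non-edges between them), while you justify them case by case from the greedy property and $K_4$-freeness; the resulting inequalities are identical.
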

\begin{proof}
Let $s_1,s_2,s_3$ be the number of size-$1,2,3$ (respectively) cliques of $P$. Then $r=s_1+s_2+s_3,n-r=s_2+2s_3,r_2=s_2+s_3,n-r-r_2=s_3$.
Applying Lemma \ref{lem:twocliques} for every pair of cliques in $P$ we get that the number of edges in $G$ is $e\le {s_1\choose 2}(1\cdot 1-1)+s_1s_2(1\cdot 2-1)+s_1s_3(1\cdot 3-1)+{s_2\choose 2}(2\cdot 2-2)+s_2s_3(2\cdot 3-2)+{s_3\choose 2}(3\cdot 3-3)+s_2+3s_3=s_1s_2+2s_1s_3+s_2^2+4s_2s_3+3s_3^2=(s_1+s_2+s_3)(s_2+2s_3)+(s_2+s_3)s_3=r(n-r)+r_2(n-r-r_2)$.
\end{proof}

Finally, as an additional motivation for Conjecture \ref{conj:nice} we show that Conjecture \ref{conj:nice} holds in the very special case when $G$ is triangle-free, that is $t=t_e=0$. Note that for a triangle-free graph the size-2 cliques of a greedy partition define a non-augmentable matching of $G$.

\begin{claim}
If $G$ is a triangle-free graph and $r=r(P)$ is the size of an arbitrary greedy partition of $G$, i.e., $G$ has a non-augmentable matching on $n-r$ edges, then $0\ge e-r(n-r)$.
\end{claim}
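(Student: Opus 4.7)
The plan is to observe that this is essentially the triangle-free specialization of Claim \ref{claim:r2} and to either invoke it directly or give a short self-contained edge-count that exploits the very rigid structure of a greedy partition of a triangle-free graph.

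First I would record the structure. Since $G$ is triangle-free, every clique in a greedy partition $P$ has size at most $2$, so $P$ consists of $s_1$ singletons and $s_2$ edges, with $r = s_1+s_2$ and $n = s_1+2s_2$; in particular $n-r = s_2$. The greedy condition at level $l=1$ forces the union of the singleton parts to be $K_2$-free, i.e.\ an independent set, which is the key non-trivial structural input (the level $l=2$ condition is automatic from triangle-freeness).

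Next I would bound the edges by counting contributions from each (unordered) pair of parts, using Lemma \ref{lem:twocliques} on each pair (their union is $K_3$-free since $G$ is triangle-free). The three cases are: (i) two singletons contribute $0$ edges by the greedy condition; (ii) a singleton $v$ and a matching edge $\{a,b\}$ contribute at most $1$ edge, since otherwise $v,a,b$ would form a triangle (Lemma \ref{lem:twocliques} with $a=1,b=2$ gives at least one non-edge); (iii) two matching edges $\{a,b\}$ and $\{c,d\}$ contribute at most $2$ edges, since Lemma \ref{lem:twocliques} with $a=b=2$ produces a non-edge matching covering one side, hence at least $2$ non-edges among the $4$ potential edges. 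Summing,
\[
e \;\le\; s_2 \;+\; s_1 s_2 \;+\; 2\binom{s_2}{2} \;=\; s_2(s_1+s_2) \;=\; r(n-r),
\]
which is the claim. Equivalently, setting $s_3=0$ in Claim \ref{claim:r2} yields the same bound $e \le r(n-r) + r_2(n-r-r_2) = r(n-r)$ immediately.

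There is no real obstacle here; the only thing to be careful about is making sure the greedy condition (and not just triangle-freeness) is used, because without it the singletons could span arbitrary edges and the bound would fail. In particular, the non-augmentability of the matching formed by the size-$2$ cliques is precisely what guarantees the singletons are independent, which is the one place where the greedy hypothesis enters in an essential way.
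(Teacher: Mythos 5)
Your proof is correct and is essentially the paper's first argument: the paper deduces the claim by plugging $r_2=n-r$ (equivalently $s_3=0$) into Claim \ref{claim:r2}, whose proof is exactly the pairwise edge count via Lemma \ref{lem:twocliques} that you spell out. The paper additionally offers a second, inductive proof, but your route matches the intended one.
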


\begin{proof}
We need to show that $e\le r(n-r)$. By Claim \ref{claim:r2}, $e\le r(n-r)+r_2(n-r-r_2)$ where $r_2$ is the number of cliques in $P$ of size at least $2$. If $G$ is triangle-free, then $r_2=n-r$ and so $e\le r(n-r)$ follows.

Let us give another simple proof by induction. As $G$ is triangle-free, $P$ is a partition of $V(G)$ to sets inducing points and edges, thus $ r\le n$.
We proceed by induction on $n-r$. If $n-r=0$ then $P$ is a partition only to points. As $P$ is greedy, $G$ contains no edges, $e=0$ and we are done. In the inductive step, for some $n-r>0$ take a part of $P$ inducing an edge and delete these two points. Now we have a triangle-free graph $G'$ on $n-2$ points and a greedy partition $P'$ of $G'$ that has $r-1$ cliques, thus we can apply induction on $G'$ (as $n'-r'=n-2-(r-1)=n-r-1<n-r$) to conlcude that $G'$ has at most $(r-1)(n-1-r)$ edges. We deleted at most $n-1$ edges, indeed as the graph is triangle-free the deleted two vertices did not have common neighbors, so altogether they had edges to at most $n-2$ other points plus the edge between them. Thus in $G$ we had at most $n-1+(r-1)(n-1-r)=r(n-r)$ edges, finishing the inductive step.
\end{proof}

%
\end{document}